\theoremstyle{plain}
\newtheorem{theorem}{Theorem}[section]
\newtheorem{lemma}[theorem]{Lemma}
\begin{document}
\title{A Peak Point Theorem\\ for Uniform Algebras on Real-Analytic Varieties}
\author{John T. Anderson\\Department of Mathematics and Computer Science\\College of the Holy Cross\\Worcester, MA 01610 \and Alexander J. Izzo
\\Department of Mathematics and Statistics
\\Bowling Green State University
\\Bowling Green, OH 43403
\\aizzo@math.bgsu.edu
\\812-855-8081}
\date{}
\maketitle

\begin{abstract}
It was once conjectured that if $A$ is a uniform algebra on its maximal ideal space $X$, and if each point of $X$ is  a peak point for $A$, then $A = C(X)$.  This peak-point conjecture was disproved by Brian Cole in 1968.   Here we establish a peak-point theorem for uniform algebras generated by real-analytic functions on real-analytic varieties, generalizing previous results of the authors and John Wermer.\footnotetext[1]{2010 {\it Mathematics Subject Classification\/}. 32A38, 32A65, 46J10, 46J15}
\end{abstract}

\vskip -1.5 true in
\centerline{\footnotesize\it Dedicated to the memory of Andr\'e Boivin} 
\vskip 1.5 truein

\section{Introduction} \label{intro}

Let
$X$ be a compact metric space, and let $C(X)$ be the algebra of all continuous complex-valued functions on $X$ with the supremum norm
$ \|f\|_{X} = \sup\{ |f(x)| : x \in X \}$.  A \emph{uniform algebra} $A$ on $X$ is a closed subalgebra of $C(X)$ that contains the constant functions and separates
the points of $X$.  A central problem in the study of uniform algebras is to characterize $C(X)$ among the uniform algebras on $X$.  We consider the following two conditions:
\begin{description}
\item (i) the maximal ideal space $\frak{M}_{A}$ of $A$ is $X$, i.e., every non-zero
multiplicative linear functional on $X$ is given by point evaluation at a point of $X$;
 \item (ii) each point of $X$ is a peak point for $A$, i.e., given $x \in X$ there exists $f \in A$ with $f(x) = 1$ and $|f(y)| < 1$ for all $y \in X \setminus \{x\}$.                     \end{description}
Both (i) and (ii) are necessary conditions for $A = C(X)$.  It was once conjectured that (i) and (ii) together were sufficient to conclude that $A = C(X)$.  A counterexample to this peak-point conjecture was produced by Brian Cole in 1968  \cite{Co} (or see the Appendix to \cite{browder}, or \cite{stout_book}, chapter~3, section 19).
Other counterexamples have since been exhibited, including ones in which the underlying subset $X$ is contained in a complex Euclidean space $\mathbb{C}^n$ and the algebra $A$ is generated by holomorphic functions.  Richard Basener \cite{basener} constructed a subset $X$ of  the unit sphere in $\mathbb{C}^2$ such that if $A = R(X)$ is the closure in $C(X)$ of the rational functions holomorphic near $X$, then $A$ satisfies (i) and (ii), but $A \neq C(X)$.  Here (i) is equivalent to the statement that $X$ is rationally convex, i.e., for each point $z\notin X$ there exists a polynomial (in the complex coordinate functions) whose zero set contains $z$ and is disjoint from $X$.  Note that the peak-point condition (ii) is automatically satisfied when $X$ is a subset of the unit sphere and $A$ is an algebra containing the polynomials.
Basener \cite{basener}  also constructed a counterexample in which $X=M$ is a smooth three-sphere in $\mathbb{C}^6$ and $A=P(M)$ is the closure in $C(M)$ of the polynomials.
Here (i) is equivalent to the condition that $M$ is polynomially convex, i.e., for each point $z \notin M$ there exists a polynomial $P$ with $|P(z)| > \|P\|_{M}$.
Alexander Izzo \cite{izzo_counterexample} subsequently constructed a counterexample in which $X = M$ is a smooth solid torus (a three-dimensional manifold with boundary) lying in the unit sphere in $\mathbb{C}^5$ and again $A = P(M)$.  Lee Stout (\cite{stout_book2}, Theorem 6.5.20) constructed a modification with the smooth solid torus replaced by a smooth three-manifold lying in the unit sphere in $\mathbb{C}^6$ and diffeomorphic to the product of a circle with a two-sphere.

Despite the failure of the conjecture, it turns out that in a number of settings of interest, one can establish peak-point results characterizing $C(X)$.
For example, Anderson and Izzo \cite{anderson_izzo_two_manifolds} proved that if $X = M$ is a compact differentiable two-dimensional manifold with boundary, and $A$ is a uniform algebra generated by $C^1$-smooth functions and satisfying (i) and (ii), then $A = C(M)$.  Note that Basener's counterexample on a smooth three-sphere in $\mathbb{C}^6$ mentioned above shows that the corresponding statement is false for three-manifolds, even in the context of polynomial approximation.  However, Anderson, Izzo and Wermer \cite{anderson_izzo_wermer_three_dimensions} showed that if $M$ is a \emph{real-analytic} three-dimensional manifold with boundary in $\mathbb{C}^n$, $X$ is a compact subset of $M$ with boundary $\partial X$ a two-manifold of class $C^{1}$, and $A = P(X)$, then (i) and (ii) suffice to imply that $A = C(X)$.  The same authors later proved \cite{anderson_izzo_wermer_varieties} that if $V$ is a compact real-analytic variety in $\mathbb{C}^n$ of arbitrary dimension, then (i) and (ii) imply $P(V) = C(V)$.  Remarkably, this last result holds without the peak-point hypothesis: Lee Stout \cite{stout_varieties} showed that if $V$ is a compact real-analytic variety in $\mathbb{C}^n$, $A = \mathcal{O}(V)$ is the algebra of functions holomorphic in a neighborhood (dependent on the function) of $V$, and $V$ is holomorphically convex (i.e., $\frak{M}_{\mathcal{O}(V)} = V$), then $A = C(V)$.  In particular, this implies that if $V$ is polynomially convex (respectively rationally convex), then $P(V) = C(V)$ (respectively $R(V) = C(V)$).

In a slightly different vein, the present authors \cite{anderson_izzo_smooth_manifolds} established some results on uniform algebras generated by $C^1$-smooth functions on a smooth
manifold with boundary of arbitrary dimension. Some of these will be used here.  For a general survey on peak-point theorems to 2011, see \cite{izzo_survey}.

Our goal here is to establish a peak-point theorem for certain compact subsets $K$ of a real-analytic variety $V$ where the algebra $A$ is generated by real-analytic functions.  Since we are interested in a general uniform algebra $A$, we consider real-analytic varieties in real Euclidean space $\mathbb{R}^n$; of course the results will apply to real-analytic varieties in $\mathbb{C}^n$ with the usual identifications.  
In the special case when $V$ is a three-dimensional manifold, a slightly stronger result was proved by the present authors in \cite{anderson_izzo_smooth_manifolds}.

Throughout the paper, given a real-analytic subvariety $V$ of an open set $\Omega \subset \mathbb{R}^n$, and given a subset $K$ of $V$, whenever we say 
\lq\lq a collection $\Phi$ of functions real-analytic on $K$,\rq\rq\ we mean that to each member $f$ of $\Phi$ there corresponds a neighborhood of $K$ in $\mathbb{R}^n$, that may depend on the function $f$, to which $f$ extends to be real-analytic.  Also, $\partial K$ will denote the boundary of $K$ relative to $V$, and ${\rm int}(K)$ will denote the interior of $K$ relative to $V$.

The precise statement of our main result is as follows.

\begin{theorem} \label{main_theorem} Let $V$ be a real-analytic subvariety of an open set $\Omega \subset \mathbb{R}^n$, and let $K$ be a compact subset of $V$ such that $\partial K$ is a real-analytic subvariety of $V$.  Let $A$ be a uniform algebra on $K$ generated by a collection $\Phi$ of functions real-analytic on $K$.  Assume that $A$ satisfies conditions (i) and (ii) above.  Then $A = C(K)$.
\end{theorem}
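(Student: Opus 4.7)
\noindent\emph{Proof plan.}
The plan is to argue by induction on $\dim V$, combining Stout's theorem on real-analytic varieties \cite{stout_varieties} with the authors' earlier peak-point results for algebras generated by smooth functions on smooth manifolds \cite{anderson_izzo_smooth_manifolds}, glued together by a local-to-global principle. The case $\dim V = 0$ is trivial because $K$ is then finite. Assume inductively that the theorem is known for all real-analytic subvarieties of strictly smaller dimension.

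First, I would stratify. Decompose $V = V_{\mathrm{reg}} \cup V_{\mathrm{sing}}$, with $V_{\mathrm{reg}}$ a real-analytic manifold and $V_{\mathrm{sing}}$ a real-analytic subvariety of strictly smaller dimension. Since $\partial K$ is itself a real-analytic subvariety of $V$ by hypothesis, perform an analogous decomposition on $\partial K$. This partitions $K$ into four locally-defined pieces: the regular interior $V_{\mathrm{reg}} \cap \mathrm{int}(K)$, the regular boundary $V_{\mathrm{reg}} \cap (\partial K)_{\mathrm{reg}}$, and two singular strata (one sitting inside $V_{\mathrm{sing}}$, the other inside $(\partial K)_{\mathrm{sing}}$).

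Next, I would establish local density of $A$ in $C(K)$ at each point of $K$, treating each stratum in turn. Near a point of the regular interior, choose a small closed neighborhood $L$ that is a compact real-analytic subvariety of $V$ lying in $\mathrm{int}(K)$; Stout's theorem applied to $A|_L$ yields $A|_L = C(L)$. Near a point of the regular boundary, the ambient geometry is that of a smooth real-analytic manifold with smooth boundary, and the results of \cite{anderson_izzo_smooth_manifolds} on algebras generated by $C^1$-smooth functions on such manifolds supply the local density. Near a point of either singular stratum, the point lies in a real-analytic subvariety of strictly smaller dimension, and the inductive hypothesis delivers the same conclusion. To pass from local to global density, invoke Bishop's decomposition of $C(K)$ over maximal antisymmetric sets for $A$: local density on a neighborhood of every point forces each maximal antisymmetric set to be a singleton, giving $A = C(K)$.

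The main obstacle will be ensuring that hypotheses (i) and (ii) descend to the restricted algebras on the various local pieces $L$: the maximal ideal space of a restriction of a uniform algebra need not equal the restricted underlying set. The neighborhood $L$ must therefore be chosen so that its Shilov boundary and spectrum both lie within $L$, and the peak-point hypothesis plays the essential role in arranging this, typically via Rossi's local maximum modulus principle together with peak functions inherited from $A$. A related subtlety is the inductive step on the singular strata, where one must confirm that the restricted family $\Phi|_{V_{\mathrm{sing}} \cap K}$ (or its analogue on $(\partial K)_{\mathrm{sing}}$) still generates a uniform algebra satisfying (i) and (ii) on the smaller subvariety; handling this without losing the peak-point hypothesis is, I expect, the technical heart of the argument.
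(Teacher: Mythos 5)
Your plan diverges substantially from the paper's, and it contains a critical misapplication at the key step. At a regular interior point you propose to pick a small compact real-analytic subvariety $L\subset{\rm int}(K)$ and invoke Stout's theorem to get $A|_L = C(L)$. But Stout's theorem applies to the algebra $\mathcal{O}(V)$ of functions \emph{holomorphic} near a compact, holomorphically convex real-analytic variety in $\mathbb{C}^n$; here the algebra $A$ is an arbitrary uniform algebra generated by \emph{real-analytic} functions on a variety in $\mathbb{R}^n$, and $A|_L$ is in general far smaller than $\mathcal{O}(L)$. The paper stresses exactly this distinction: the peak-point hypothesis is \emph{necessary} for its theorem (witness $P(\overline{\mathbb{D}})$), whereas Stout's result has no peak-point hypothesis at all. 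So the central local density claim at regular interior points is unsupported. What replaces Stout's theorem in the paper is the exceptional set $M_\Phi$ together with Izzo's approximation theorem (Theorem~\ref{approximation_theorem}): the peak-point hypothesis forces $M_\Phi$ to have no interior (Lemma~\ref{exceptional_set_empty_int}), and Izzo's theorem then pushes the support of an annihilating measure off the set of points where some $m$-tuple from $\Phi$ has nonvanishing Jacobian. Your proposal never mentions the exceptional set, which is the technical heart of the real-analytic case.

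A second gap is the induction scheme. The paper explicitly explains why a single induction on $\dim V$ does not close: one must pass both to the singular set and to the exceptional set, and it is not known that the union of the two is locally contained in a proper subvariety, so one cannot simply reduce the dimension by one and repeat. The paper circumvents this with two interlocking filtrations (the $E$- and $S$-filtrations of $\Sigma$) and a double induction in Lemmas~\ref{support_in_S_filtration} and~\ref{support_in_boundary}; nothing analogous appears in your outline. A smaller issue: your inductive step treats $V_{\mathrm{sing}}\cap K$ as a new instance of the theorem, but its boundary relative to $V_{\mathrm{sing}}$ need not be a real-analytic subvariety, so Theorem~\ref{main_theorem} itself does not apply to it. The paper avoids this by proving the boundary-free support statement (Theorem~\ref{support_theorem}) first and then applying it twice (to $K$, then to $\partial K$ viewed as all of its own interior). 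Finally, your local-to-global step via Bishop antisymmetric decomposition is plausible in spirit, but the paper instead works directly with annihilating measures; either mechanism would be fine once correct local statements are in hand, but those local statements are precisely what is missing here.
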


Note that the peak-point hypothesis is necessary here, in contrast with Stout's result: if $V$ is the complex plane, $K$ the closed unit disk, and $A = P(K)$ is the algebra of functions holomorphic on the interior of $K$ and continuous on its closure, then $A$ satisfies (i) but not (ii), and of course $A \neq C(K)$.  Theorem 1.1 extends both results of Anderson, Izzo and Wermer mentioned above in two ways: (a) rather than restricting to the algebra generated by polynomials, we take a general algebra $A$ generated by real-analytic functions, and (b) we obtain a result on certain compact subsets of a variety $V$ whose dimension is arbitrary.

As is typical in these theorems, we argue using duality: it suffices to prove that if $\mu$ is a measure supported on $K$ with $\mu \in A^{\perp}$, i.e., such that
\[ \int f d\mu = 0 \mbox{ for all }f \in A, \]
then $\mu = 0$.  In fact, to prove Theorem 1.1, we will prove

\begin{theorem} \label{support_theorem} Let $V$ be a real-analytic subvariety of an open set $\Omega \subset \mathbb{R}^n$, and let $K$ be a compact subset of $V$.  Let $A$ be a uniform algebra on $K$ generated by a collection $\Phi$ of functions real-analytic on $K$.  Assume that $A$ satisfies (i) and (ii) above.  Then $\mu \in A^{\perp}$ implies\/ ${\rm supp}(\mu) \cap {\rm int}(K) = \emptyset$. \end{theorem}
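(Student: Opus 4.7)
My plan is to argue by induction on the dimension $d$ of $V$, using the peak-point theorem for uniform algebras on smooth manifolds from the authors' earlier work \cite{anderson_izzo_smooth_manifolds} at each step to handle the regular stratum. The base case $d=0$ is immediate: $V$ is discrete, $K$ is finite, and conditions (i) and (ii) together with separation of points force $A=C(K)$, so $A^{\perp}=\{0\}$.

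For the inductive step, let $V_{\mathrm{reg}}$ and $V_{\mathrm{sing}}$ denote the regular and singular loci of $V$. Classical real-analytic geometry gives that $V_{\mathrm{sing}}$ is a real-analytic subvariety of $V$ of dimension strictly less than $d$, while $V_{\mathrm{reg}}$ is a real-analytic manifold. Fix $\mu\in A^{\perp}$ and $p\in\mathrm{int}(K)$; I argue separately in the two cases $p\in V_{\mathrm{reg}}$ and $p\in V_{\mathrm{sing}}$. In the first case, a sufficiently small closed neighborhood $U$ of $p$ in $V$ is a smooth real-analytic manifold contained in $\mathrm{int}(K)$, and the functions in $\Phi$ extend $C^{1}$-smoothly to a neighborhood of $U$ in $\mathbb{R}^{n}$. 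After verifying that the localized uniform algebra on $U$ inherits hypotheses (i) and (ii) and that the relevant portion of $\mu$ behaves correctly under localization, the support version of the manifold peak-point theorem in \cite{anderson_izzo_smooth_manifolds} yields $p\notin\mathrm{supp}(\mu)$. In the second case, $p\in V_{\mathrm{sing}}\cap\mathrm{int}_{V}(K)$; here apply the inductive hypothesis to the triple consisting of $V_{\mathrm{sing}}$, the compact set $K\cap V_{\mathrm{sing}}$, and the uniform closure on $K\cap V_{\mathrm{sing}}$ of the restrictions of $A$, noting that $p$ lies in $\mathrm{int}_{V_{\mathrm{sing}}}(K\cap V_{\mathrm{sing}})$ because any $V$-neighborhood of $p$ contained in $K$ meets $V_{\mathrm{sing}}$ in a $V_{\mathrm{sing}}$-neighborhood of $p$ contained in $K\cap V_{\mathrm{sing}}$.

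The main obstacle will be verifying that hypotheses (i) and (ii) descend correctly in both steps, and that the localized or restricted measure can indeed be viewed as an annihilator of the new algebra. Condition (ii) restricts cleanly under both localization and passage to a subvariety, but condition (i) is more delicate: naively restricting a uniform algebra to a closed subset can enlarge the maximal ideal space, and a direct restriction of $\mu$ need not annihilate the restricted algebra either. One expects to handle this by enlarging the generating set (for instance by adjoining products of real-analytic functions vanishing on $V_{\mathrm{sing}}$), by invoking Glicksberg-type peak-set decompositions of $\mu$ together with peak functions supplied by (ii), or by replacing the induction on $V$ itself with a more delicate localization that produces an auxiliary algebra on a small ball in $\mathbb{R}^{n}$. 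Once these compatibility issues are resolved, the induction closes and Theorem~\ref{support_theorem} follows.
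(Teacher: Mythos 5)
Your dichotomy into $p\in V_{\mathrm{reg}}$ and $p\in V_{\mathrm{sing}}$ misses the real difficulty, and the step you lean on for the regular stratum does not exist. There is no ``support version of the manifold peak-point theorem'' in \cite{anderson_izzo_smooth_manifolds} that works for $C^1$ (or even real-analytic) algebras on manifolds of arbitrary dimension: the counterexamples of Basener (a polynomially convex $3$-sphere in $\mathbb{C}^6$) and Izzo (a solid torus in $\mathbb{C}^5$) are exactly uniform algebras on smooth manifolds, generated by polynomials, satisfying (i) and (ii), with $A\neq C(X)$. What \cite{anderson_izzo_smooth_manifolds} supplies, and what the paper actually uses, is much weaker: Lemma~2.2 there says only that the exceptional set $M_\Phi$, where the $1$-forms $df$, $f\in\Phi$, fail to span, has empty interior in $M$. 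At a regular point $p$ of $V$ that happens to lie in the exceptional set, you have no tool to remove $p$ from $\mathrm{supp}(\mu)$ directly. The paper's Theorem~2.4 (Izzo's approximation theorem) only pushes $\mathrm{supp}(\mu)$ into the union $E=V_{\mathrm{sing}}\cup V_\Phi$; since $V_\Phi$ is a subvariety of $V_{\mathrm{reg}}$ (Lemma~2.1), not of $V$, and might conceivably accumulate all along $V_{\mathrm{sing}}$, one cannot simply regard $E$ as a lower-dimensional variety and induct. This is precisely the obstruction the paper flags at the end of Section~2, and it is what forces the two-track E-filtration/S-filtration argument of Lemmas~3.1--3.3: first an induction on the nested exceptional sets $\Sigma_0\supset\Sigma_1\supset\cdots$ pushes the support entirely into the singular pieces $(\Sigma_k)_{\mathrm{sing}}$, and then a separate induction on dimension of the $(\Sigma_k)_{\mathrm{sing}}$ kills the support locally.

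On your secondary worry about restricting $\mu$: the paper never restricts $\mu$ to a set that might lose part of its support. Instead, at each stage it proves $\mathrm{supp}(\mu)\subset X$ for some closed $X\subset K$, after which it is automatic that $\mu\in(\overline{A|X})^\perp$ (integrating $g|X$ against $\mu$ equals integrating $g$ against $\mu$ when $\mathrm{supp}(\mu)\subset X$), so Theorem~2.4 applied to $\overline{A|X}$ is legitimate, and Lemma~2.3 hands you (i) and (ii) for $\overline{A|X}$. So the annihilation issue you anticipate is real but has a clean resolution once you reverse the logic: shrink the support first, restrict afterwards. Your proposal, by contrast, wants to restrict to $K\cap V_{\mathrm{sing}}$ before you know $\mathrm{supp}(\mu)$ lives there, and none of the remedies you sketch (enlarging generators, Glicksberg decompositions) addresses the fundamental circularity of invoking a manifold peak-point theorem that is known to be false.
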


Theorem \ref{main_theorem} is an easy consequence of Theorem \ref{support_theorem} (see the beginning of section \ref{proof} for the proof).  In section \ref{prelims} we collect some preliminary lemmas and comment on the general outline of the (quite technical) proof of Theorem \ref{support_theorem}, which is presented in section \ref{proof}.

\section{Preliminaries} \label{prelims}

Let $\Sigma$ be a real-analytic variety in the open set $W \subset \mathbb{R}^n$, that is, for each point $p \in W$ there is a finite set $\mathcal{F}$ of functions real-analytic in a neighborhood $N$ of $p$ in $W$ with $\Sigma $ the common zero set of $\mathcal{F}$ in $N$. The following definitions and facts are standard; see for example \cite{narasimhan}. We let $\Sigma_{\mbox{\tiny reg}}$ denote the set of points $p \in \Sigma$ for which there exists a neighborhood $N$ of $p$ in $\mathbb{R}^n$ such that $\Sigma \cap N$ is a regularly
imbedded real-analytic submanifold of $N$ of some dimension $d := d(p)$.   This dimension $d(p)$ is locally constant on $\Sigma_{\mbox{\tiny reg}}$.  The dimension of $\Sigma,$ denoted by $\mbox{dim}(\Sigma)$, is defined to be the largest such $d(p)$ as $p$ ranges over the regular points of $\Sigma$.  The singular set of $\Sigma$, denoted by $\Sigma _{\mbox{\tiny sing}}$, is the complement in $\Sigma$ of $\Sigma_{\mbox{\tiny reg}}$.  If $\Sigma^{\prime} \subset \Sigma$ is a real-analytic subvariety of some open set $W^{\prime} \subset W$ and $\Sigma^{\prime}$ has empty interior relative to $\Sigma$, then $\mbox{dim}(\Sigma^{\prime}) < \mbox{dim}(\Sigma)$.  Both $\Sigma$ and $\Sigma_{\mbox{\tiny sing}}$ are closed in $W$.  Although $\Sigma_{\mbox{\tiny sing}}$ may not itself be a subvariety of $W$, it is locally contained in a proper subvariety of $\Sigma$: for each $p \in \Sigma _{\mbox{\tiny sing}}$, there is a real-analytic subvariety $Y$ of an open neighborhood $N$ of $p$ such that $\Sigma_{\mbox{\tiny sing}} \cap N \subset Y$  and $\mbox{dim}(Y) < \mbox{dim}(\Sigma)$.

If $M$ is an $m$-dimensional manifold of class $C^1$ and
$\Phi$ is a collection 
of functions that are $C^{1}$ on
$M$, we define the {\em exceptional set $M_{\Phi}$ of $M$ relative to $\Phi$}  by
\begin{equation} M_{\Phi} = \{ p \in M : df_{1} \wedge \ldots \wedge df_{m}(p) = 0 \mbox{ for all }m \mbox{-tuples } (f_{1}, \ldots , f_{m}) \in  \Phi^m \}. \label{eq:exceptional} \end{equation}
If $\Sigma$ is a real-analytic variety and $\Phi$ is a collection of functions real-analytic on $\Sigma$ (i.e., each function in $\Phi$ extends to be real-analytic in a neighborhood (depending on the function) of $\Sigma$), then $\Sigma_{\Phi}$ is defined to be the set of all points $p \in \Sigma_{\mbox{\tiny reg}}$ such that in a neighborhood of $p$, the set  $\Sigma_{\mbox{\tiny reg}}$ is an $m$-dimensional manifold $M$, and $p \in M_{\Phi}$ as defined in (\ref{eq:exceptional}).

\begin{lemma} \label{exceptional_set_variety} Let $\Sigma$ be a real-analytic variety in an open set $W \subset \mathbb{R}^n$, and $\Phi$ a collection of functions real-analytic on $\Sigma$.  Then $\Sigma_{\Phi}$ is a subvariety of $W \setminus \Sigma _{\mbox{\rm\tiny sing}}$. \end{lemma}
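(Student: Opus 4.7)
The plan is to verify, in a neighborhood of each point $p \in W \setminus \Sigma_{\mbox{\tiny sing}}$, that $\Sigma_\Phi$ is the common zero set of finitely many real-analytic functions. The case $p \notin \Sigma$ is trivial: since $\Sigma$ is closed in $W$ and $\Sigma_\Phi \subset \Sigma$, a small neighborhood of $p$ is disjoint from $\Sigma_\Phi$. So I would suppose $p \in \Sigma_{\mbox{\tiny reg}}$, and choose a neighborhood $N$ of $p$ on which $\Sigma \cap N$ is a real-analytic submanifold of dimension $m$, cut out by real-analytic functions $\rho_1, \ldots, \rho_{n-m}$ with linearly independent differentials.

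For each $m$-tuple $T = (f_1, \ldots, f_m) \in \Phi^m$, I would define a real-analytic function $h_T$ on some neighborhood of $p$ (depending on $T$) as the coefficient of $dx_1 \wedge \cdots \wedge dx_n$ in the top-degree form $d\rho_1 \wedge \cdots \wedge d\rho_{n-m} \wedge df_1 \wedge \cdots \wedge df_m$. For $q \in \Sigma \cap N$, the condition $h_T(q) = 0$ is equivalent to the restriction $(df_1 \wedge \cdots \wedge df_m)|_{T_q\Sigma}$ vanishing, so $\Sigma_\Phi$ coincides, near $p$, with the set where all of $\rho_1, \ldots, \rho_{n-m}$ and all $h_T$ vanish. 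I would then invoke the Noetherian property of the local ring $\mathcal{O}_{\mathbb{R}^n,p}$ of germs of real-analytic functions at $p$: the ideal $I$ generated there by the germs of $\rho_1, \ldots, \rho_{n-m}$ together with the germs of all $h_T$ is finitely generated, say $I = (\rho_1, \ldots, \rho_{n-m}, h_{T_1}, \ldots, h_{T_k})$. Let $Z$ denote the common zero set of these finitely many generators in some small neighborhood of $p$; this is manifestly a real-analytic subvariety, and the inclusion $\Sigma_\Phi \subset Z$ is immediate.

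The principal obstacle is proving the reverse inclusion $Z \subset \Sigma_\Phi$ on a neighborhood of $p$ that is \emph{uniform} in $T$. For each $T$, the relation $[h_T]_p \in I$ produces an identity $h_T = \sum a_i \rho_i + \sum b_j h_{T_j}$ valid on some neighborhood $U_T$ of $p$, so $h_T$ vanishes on $Z \cap U_T$; but $U_T$ a priori shrinks with $T$. To eliminate this dependence I would use the primary decomposition of $I$ in the Noetherian ring $\mathcal{O}_{\mathbb{R}^n, p}$ to write the germ $Z_p$ as a finite union of irreducible components $Z_1, \ldots, Z_r$, and choose representatives of each that are irreducible real-analytic subvarieties on which the identity theorem applies in the form: an ambient real-analytic function vanishing on a neighborhood of $p$ in $Z_j$ vanishes identically on the whole chosen representative of $Z_j$. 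Since each $h_T$ already vanishes on the open subset $Z_j \cap U_T$ of $Z_j$, this gives $h_T \equiv 0$ on every representative $Z_j^{\rm rep}$, uniformly in $T$. Shrinking $N$ to lie inside the intersection of these finitely many representatives then yields $h_T \equiv 0$ on $Z \cap N$ for every $T \in \Phi^m$, giving $Z \cap N = \Sigma_\Phi \cap N$ and exhibiting $\Sigma_\Phi$ as a subvariety near $p$.
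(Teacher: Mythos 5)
Your proof is correct and follows the same overall approach as the paper: express $\Sigma_\Phi$ near a regular point $p$ as the common zero set of a family of Jacobian-type minors (you use defining functions $\rho_i$ and the coefficient of a top-degree wedge, whereas the paper uses local coordinates and $m\times m$ determinants — these encode the same rank condition), then invoke the Noetherian property of $\mathcal{O}_p$ to cut down to finitely many of them. Where you differ is in \emph{completing} the final step. The paper's proof passes directly from finite generation of the germ ideal to ``we may therefore choose finitely many functions\dots\ so that the common zero set of these functions is equal to $\Sigma_\Phi\cap N'$,'' implicitly relying on the standard fact that the common zero set of an \emph{arbitrary} family of real-analytic functions is a real-analytic set. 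You correctly observe that this is not literally immediate from Noetherianness: the germ identity $h_T=\sum a_i h_{T_i}$ holds only on a neighborhood $U_T$ that a priori shrinks with $T$. Your fix — decompose the germ of the candidate zero set into finitely many irreducible components, note that each $h_T$ vanishes on a neighborhood of $p$ in each component, and push that to vanishing on a fixed representative of each component via the identity theorem — is essentially the standard proof of the fact the paper invokes, so you've filled a genuine (if routine) gap rather than taken a different route. One small caution: over $\mathbb{R}$, extracting the irreducible components of the zero-set germ from a primary decomposition of the ideal $I$ is delicate (the real Nullstellensatz intervenes, and the zero set of a prime in $\mathcal{O}_p$ need not be handled naively); it is cleaner to invoke directly the existence and finiteness of the irreducible decomposition of a germ of a real-analytic \emph{set}, which is standard (e.g.\ in Narasimhan) and supplies exactly what your argument needs.
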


\begin{proof} Suppose $p \in \Sigma_{\mbox{\tiny reg}}$.  We may choose a neighborhood $N$ of $p$ in $\mathbb{R}^n$ so that $\Sigma_{\mbox{\tiny reg}} \cap N$ is an $m$-dimensional real-analytic submanifold of $N$, for some $m$,  and local coordinates $t_{1}, \ldots ,t_{2n}$ in $N$ so that $t_{1}, \ldots, t_{m}$ are real-analytic local coordinates on $\Sigma_{\mbox{\tiny reg}} \cap N$.  Then $\Sigma_{\Phi} \cap N$ is the common zero set of the determinants of $( \partial f_{i} / \partial t_{j})_{i,j=1}^m$ over all $m$-tuples $f_{1}, \ldots , f_{m}$ of elements of $\Phi$.  Each of these determinants extends to a real-analytic function on a neighborhood of $\Sigma_{\mbox{\tiny reg}} \cap N$ in $N$, depending on the functions $f_{1}, \ldots, f_{m}$.  The set of all such determinants generates an ideal in the ring of germs $\mathcal{O}_{p}$ of real-analytic functions at $p$.  Since $\mathcal{O}_{p}$ is Noetherian (see \cite{narasimhan}, chapter 2, Theorem 4), this ideal is finitely generated.  
We may therefore choose finitely many functions from the ideal so that these functions are real-analytic in a fixed neighborhood $N'\subset N$ of $p$ and so that the common zero set of these functions is equal to $\Sigma_{\Phi} \cap N'$. \end{proof}

\begin{lemma}[Lemma 2.2 of \cite{anderson_izzo_smooth_manifolds}]\label{exceptional_set_empty_int} Let $M$ be an $m$-dimensional differentiable submanifold of $\mathbb{R}^n$ with boundary of class $C^{1}$.
Let $K$ be a compact subset of $M$, and let $A$ be a uniform algebra on $K$ generated by a collection $\Phi$ of functions of class $C^1$ in a neighborhood of $K$ in $M$.  Assume that $A$ satisfies (i) and (ii).  Then $M_{\Phi} \cap K$ has empty interior in $M$. \end{lemma}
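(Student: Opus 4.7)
The plan is to argue by contradiction. Suppose $M_\Phi \cap K$ has nonempty interior in $M$; then there is a nonempty open set $W \subset M$ with $W \subset M_\Phi \cap K$. Since $\partial M$ is an $(m-1)$-dimensional $C^1$-submanifold of the $m$-manifold $M$, it is nowhere dense in $M$, so after shrinking $W$ I may assume $W \subset M \setminus \partial M$. The aim is to derive a contradiction from hypotheses (i) and (ii).

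First I would extract structural information from $W \subset M_\Phi$. For $p \in W$ set $r(p) := \dim_\mathbb{C} \mathrm{span}_\mathbb{C}\{df(p) : f \in \Phi\} \subset T_p^* M \otimes \mathbb{C}$; by the definition of $M_\Phi$, $r(p) \leq m-1$ on $W$. Using lower semi-continuity of $r$ and shrinking $W$, I may assume $r \equiv r_0$ on $W$ for some $r_0 \leq m-1$, with fixed $f_1, \ldots, f_{r_0} \in \Phi$ whose differentials span $\mathrm{span}_\mathbb{C}\{df(p) : f \in \Phi\}$ at every $p \in W$. Then for every $g \in \Phi$ and every $p \in W$, $dg(p)$ is a complex linear combination of $df_1(p), \ldots, df_{r_0}(p)$; consequently, for every finite $\{g_1, \ldots, g_N\} \subset \Phi$, the map $(g_1, \ldots, g_N) : W \to \mathbb{C}^N$ has complex differential of rank at most $r_0 < m$ at every point of $W$.

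The peak-point and maximal-ideal-space hypotheses enter via the Gelfand embedding $\widetilde\Phi : K \to \mathbb{C}^\Phi$, $p \mapsto (f(p))_{f \in \Phi}$, which under (i) identifies $\widetilde K := \widetilde\Phi(K)$ as a polynomially convex set with $A \cong P(\widetilde K)$, and under (ii) makes every point of $\widetilde K$ a peak point for $P(\widetilde K)$. The principal obstacle --- and the technical heart of the lemma --- is to convert the rank deficit above into a contradiction with this peak-point condition. My plan is to reduce by a density argument to a finite subfamily of $\Phi$ so that $\widetilde K$ sits in some $\mathbb{C}^N$; in that finite-dimensional setting the restriction of $\widetilde\Phi$ to $W$ satisfies one of two alternatives at every $p \in W$: either (a) the real differential of the projection $W \to \mathbb{C}^N$ fails to be injective at $p$, so that the image carries positive-dimensional level sets, or (b) the projection is a real immersion but its $m$-dimensional real image in $\mathbb{C}^N$ fails to be totally real (since the complex span of tangent directions has dimension at most $r_0 < m$). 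In alternative (a) I would construct a nonzero $A$-orthogonal measure supported on a positive-dimensional fiber and contradict the peak-point property at a point of its support via $A$-orthogonality; in alternative (b) I would attempt a Bishop-type construction producing a non-constant analytic disk attached to $\widetilde\Phi(W)$ at a chosen $p_0$, which by the maximum modulus principle prevents $p_0$ from being a peak point. The most delicate point is the disk construction in the non-totally-real immersed case (b), where one must genuinely integrate the infinitesimal complex tangent data to a holomorphic disk, and this is where I would expect the main technical difficulty of the proof to lie.
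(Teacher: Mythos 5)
The paper does not prove this lemma; it quotes it verbatim as Lemma~2.2 of \cite{anderson_izzo_smooth_manifolds}, so there is no ``paper's own proof'' to compare against. Judged on its merits, your sketch correctly identifies the essential dichotomy: on an open set where the complex rank $r_0$ of $\{df(p):f\in\Phi\}$ is constant and $<m$, either the \emph{real} rank is also $<m$ (so some pair of points of $K$ is not separated by $A$), or the Gelfand map is an immersion whose image is not totally real (so there is CR structure to exploit). This is the right shape, and you correctly observe that complex-rank deficiency is strictly weaker than real-rank deficiency, which is the point of the lemma.

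However, there are genuine gaps. First, your alternative~(a) as stated is wrong: failure of injectivity of the real differential \emph{at a single point} $p$ does not produce positive-dimensional level sets; you must shrink $W$ further so that the \emph{real} rank is locally constant, and only if that constant is $<m$ does the constant-rank theorem give positive-dimensional fibers on which every $g\in\Phi$ (and hence every element of $A$) is constant, contradicting that $A$ separates points (you do not even need an annihilating measure for this). Second, the ``density argument to reduce to a finite subfamily of $\Phi$'' is not innocuous: replacing $\Phi$ by a finite subcollection changes the generated algebra and can enlarge the maximal ideal space (e.g.\ restricting $\{z,\bar z\}$ on the circle to $\{z\}$ turns $C(\partial\Delta)$ into the disk algebra, whose spectrum is the whole disk), so neither hypothesis~(i) nor the identification $A\cong P(\widetilde K)$ for $\widetilde K\subset\mathbb{C}^N$ is preserved without further argument. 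Third, and most seriously, alternative~(b) is the entire content of the lemma and you leave it as a gesture toward ``a Bishop-type construction.'' Bishop's disk construction requires an isolated nondegenerate (e.g.\ elliptic) complex tangent of a two-dimensional surface; here one has an $m$-manifold ($m$ arbitrary) immersed in $\mathbb{C}^N$ with a complex tangent on an open set, i.e.\ a CR manifold of positive CR dimension, and the relevant machinery is CR extension (Baouendi--Treves approximation, Tumanov/Tr\'epreau wedge extension) applied to the CR functions $g\circ F^{-1}$, together with a local maximum modulus or Gleason-part argument to contradict the peak-point hypothesis. That step requires real work (minimality/nondegeneracy issues, passing the extension through the uniform algebra), and without it the proof does not close. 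It is worth noting that in the low-dimensional model case ($m=2$, $r_0=1$, real rank $2$) no disks need be attached: after shrinking, $f_1$ becomes a local $C^1$ diffeomorphism onto a planar domain in which every $g\in\Phi$ is holomorphic, and the maximum principle directly forbids interior peak points for $\overline{A|\overline{W}}$; one would want an analogous argument, not a Bishop disk, in higher dimension.
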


\begin{lemma}[Lemma 2.3 of \cite{anderson_izzo_smooth_manifolds}]\label{inheritance_lemma} Let $A$ be a uniform algebra on a compact Hausdorff space $X$ satisfying (i) and (ii).  If $Y$ is a closed subset
of $X$, then $\overline{A|Y}$ also satisfies (i) and (ii). \end{lemma}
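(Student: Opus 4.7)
\medskip

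My plan is to verify the two parts of the conclusion separately. First I note that $B := \overline{A|Y}$ is indeed a uniform algebra on $Y$: it is closed by construction, contains the constants, and separates the points of $Y$ because $A$ separates the points of $X \supseteq Y$. The peak-point condition (ii) for $B$ is essentially immediate: given $y \in Y$, the hypothesis says there exists $f \in A$ with $f(y) = 1$ and $|f(x)| < 1$ for all $x \in X \setminus \{y\}$; then $f|Y \in A|Y \subseteq B$ witnesses that $y$ is a peak point for $B$.

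The substantive step is verifying condition (i). Let $\pi \colon A \to B$ denote the restriction map $f \mapsto f|Y$; this is a norm-decreasing algebra homomorphism with dense range. Given any $\varphi \in \mathfrak{M}_B$, the composition $\psi := \varphi \circ \pi$ is a nonzero multiplicative linear functional on $A$ (continuity is free since $\|\psi\| \leq \|\varphi\|\,\|\pi\| \leq 1$). By hypothesis (i) applied to $A$, there exists $x_0 \in X$ with $\psi(f) = f(x_0)$ for every $f \in A$.

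The key step is to show $x_0 \in Y$, and this is where the peak-point hypothesis on $A$ does real work. Suppose for contradiction that $x_0 \in X \setminus Y$. Applying (ii) for $A$ at $x_0$ produces $f \in A$ with $f(x_0) = 1$ and $|f(x)| < 1$ for all $x \neq x_0$. Since $Y$ is compact and avoids $x_0$, a standard compactness argument gives $\|f|Y\|_Y < 1$. But then
\[
1 = f(x_0) = \psi(f) = \varphi(f|Y) \leq \|f|Y\|_Y < 1,
\]
a contradiction. Hence $x_0 \in Y$, and $\varphi$ agrees with evaluation at $x_0$ on the dense subset $A|Y$ of $B$, so by continuity $\varphi = \mathrm{ev}_{x_0}$ on all of $B$. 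This proves $\mathfrak{M}_B = Y$.

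The main (indeed the only nontrivial) obstacle is this last step: extracting $x_0 \in Y$ from the general point $x_0 \in X = \mathfrak{M}_A$ produced by hypothesis (i) on $A$. The peak-point hypothesis is tailor-made for this, since it provides, for each point outside $Y$, an element of $A$ whose restriction to $Y$ has norm strictly less than its value at that point; it is precisely this feature that converts the abstract naturality of $A$ into naturality for the restricted algebra $B$.
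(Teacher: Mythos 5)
Your argument is correct, and it is essentially the standard one used to prove this lemma (the paper merely cites Lemma 2.3 of Anderson--Izzo's \emph{Peak point theorems for uniform algebras on smooth manifolds}, whose proof proceeds exactly along these lines): pull a character $\varphi$ of $\overline{A|Y}$ back to a character of $A$ via the restriction map, use condition (i) for $A$ to represent it by a point $x_0 \in X$, and then invoke the peak-point hypothesis at $x_0$ together with compactness of $Y$ to force $x_0 \in Y$. One cosmetic remark: in the displayed chain of (in)equalities you should write $|\varphi(f|Y)| \leq \|f|Y\|_Y$ rather than comparing the complex number $\varphi(f|Y)$ directly to a real bound, though this is harmless here since $\varphi(f|Y)=1$.
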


We will make use of a recent result of Izzo that will enable us to reduce approximation on a variety to approximation on the union of the exceptional set and the singular set of the variety.  This type of theorem has a long history, going back to work of John Wermer \cite{wermer} and Michael Freeman \cite{freeman} in the 1960's - for a detailed account, see \cite{izzo_approx_on_manifolds}.

\begin{theorem}[Izzo, \cite{izzo_approx_on_manifolds}] \label{approximation_theorem}
Let $A$ be a uniform algebra on a compact
Hausdorff space $X$, and suppose that the maximal ideal space of $A$ is $X$.
Suppose also that $E$ is a closed subset of $X$ such that $X \setminus E$
is an $m$-dimensional manifold and such that
\begin{enumerate}
\item for each point $p \in X \setminus E$ there are functions
$f_1, \ldots, f_m$ in $A$ that are $C^1$ on $X \setminus E$ and satisfy $df_1
\wedge \ldots \wedge df_m (p) \neq 0$, and
\item  the functions in $A$ that are $C^1$ on $X \setminus E$ separate points
on $X$.
  \end{enumerate}
Then $A= \{g \in C(X) : g | E \in A|E \}$.  \end{theorem}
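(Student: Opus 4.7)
The approach is duality. Define $B = \{g \in C(X) : g|_E \in A|_E\}$, a uniform algebra on $X$ containing $A$; to prove $A = B$ it suffices by Hahn-Banach that every measure $\mu \in A^\perp$ also annihilates $B$, which reduces to the \emph{key claim}: every $\mu \in A^\perp$ has $\mbox{supp}(\mu) \subset E$. Granting the claim, if $g \in B$ and $f_n \in A$ with $f_n \to g$ uniformly on $E$, one has $\int g\, d\mu = \int_E g\, d\mu = \lim_n \int_E f_n\, d\mu = \lim_n \int f_n\, d\mu = 0$.

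For the key claim, fix $\mu \in A^\perp$ and $p \in X \setminus E$, with the goal of showing that $\mu$ vanishes on a neighborhood of $p$. By hypothesis (1), pick $f_1, \ldots, f_m \in A$, $C^1$ on $X \setminus E$, with $df_1 \wedge \cdots \wedge df_m(p) \neq 0$. Expressing the wedge in local real coordinates on $X \setminus E$ yields a complex determinant, and its non-vanishing is precisely the condition that the complex-linear extension of $dF$, where $F = (f_1, \ldots, f_m)$, is an isomorphism $T_p(X \setminus E) \otimes \mathbb{C} \to \mathbb{C}^m$; geometrically, this says $F(X \setminus E)$ is a totally real $C^1$-submanifold of $\mathbb{C}^m$ near $F(p)$. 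Using hypothesis (2), adjoin $C^1$ functions $h_1, \ldots, h_N \in A$ so that $\widetilde F = (f_1, \ldots, f_m, h_1, \ldots, h_N) : X \to \mathbb{C}^{m+N}$ is an embedding of $X$; total reality near $\widetilde F(p)$ is preserved under the augmentation, since it is controlled by the first $m$ coordinates alone. The pushforward $\widetilde F_*\mu$ on $\widetilde F(X) \subset \mathbb{C}^{m+N}$ is then orthogonal to every holomorphic polynomial in the coordinates, because each such polynomial pulled back to $X$ lies in $A$.

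Near $\widetilde F(p)$, take a small compact piece $L \subset \widetilde F(X \setminus E)$; being a totally real $C^1$-submanifold of $\mathbb{C}^{m+N}$, $L$ is polynomially convex and polynomials are uniformly dense in $C(L)$ by the Harvey-Wells theorem. To deduce that $\widetilde F_*\mu$ vanishes on a neighborhood of $\widetilde F(p)$ within $\widetilde F(X)$, I would combine this local polynomial approximation with an abstract localization principle from uniform algebra theory---for instance, Bishop's antisymmetric decomposition applied to the closed subalgebra of $A$ generated by the coordinates of $\widetilde F$, or a Cauchy-Fantappi\`e integral representation adapted to totally real submanifolds---together with the maximal-ideal-space hypothesis on $A$ to extract some form of polynomial convexity for $\widetilde F(X)$. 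The chief technical obstacle is precisely this globalization step: turning local polynomial density on the totally real patch $L$ into a vanishing statement for the pushforward measure across a full neighborhood, while controlling the behavior of the approximating functions over the rest of $\widetilde F(X)$. Once $\widetilde F_*\mu$ is known to vanish in a neighborhood of $\widetilde F(p)$, injectivity of $\widetilde F$ delivers the vanishing of $\mu$ near $p$, completing the key claim.
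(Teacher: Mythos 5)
This theorem is stated in the paper as a citation to Izzo's \emph{Uniform approximation on manifolds} (Ann.\ of Math.\ 2011); the paper under review does not prove it, so there is no internal proof to compare against. Evaluating your sketch on its own merits: the reduction by duality to showing $\mathrm{supp}(\mu)\subset E$ for $\mu\in A^\perp$, the interpretation of $df_1\wedge\cdots\wedge df_m(p)\neq 0$ as total reality of the local image of $F=(f_1,\ldots,f_m)$, and the augmentation to a global embedding $\widetilde F$ using hypothesis (2) are all correct and are indeed the standard opening moves in this circle of ideas (Wermer, Freeman, H\"ormander--Wermer, Izzo).

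The gap is the one you flag yourself, and it is not a routine technicality but the entire content of the theorem. Polynomial density on a small totally real patch $L\subset\widetilde F(X\setminus E)$ (Harvey--Wells, or O'Farrell--Preskenis--Walsh) says nothing about a measure that is also supported on the rest of $\widetilde F(X)$, including the image of $E$ where no smoothness is assumed; one cannot ``test'' the measure against $C(L)$ without controlling the approximants globally. The tools you propose for the globalization---Bishop's antisymmetric decomposition, Cauchy--Fantappi\`e kernels---are not the right ones here. The mechanism that actually closes this step in Izzo's proof, and in the earlier results it extends, is the combination of (a) the H\"ormander--Wermer $\bar\partial$-technique: extend $C^1$ functions from the totally real piece to almost-holomorphic functions on a tubular (pseudoconvex) neighborhood and correct them by solving $\bar\partial$ with sup-norm estimates, and (b) the Oka--Weil/Arens--Calder\'on functional calculus: the hypothesis $\mathfrak{M}_A = X$ guarantees that a function holomorphic on a neighborhood of the joint spectrum $\widetilde F(X)$ of $(f_1,\ldots,f_m,h_1,\ldots,h_N)$ already belongs to $A$. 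It is precisely this last point---entirely absent from your sketch---that converts local analytic constructions near $\widetilde F(p)$ into membership in $A$ and hence into information about $\mu\in A^\perp$; without it the maximal-ideal-space hypothesis plays no role in your argument, which is a sign the argument cannot be complete.
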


The general idea of the proof of Theorem \ref{support_theorem} is to use Theorem \ref{approximation_theorem} to reduce approximation on a variety $V$ to approximation on the union of the singular set of $V$ and the exceptional set of the algebra $A$.  As we have noted, the singular set of $V$ is locally contained in a proper subvariety of $V$, i.e., a variety of dimension strictly less than the dimension of $V$.  The exceptional set, in the presence of the peak-point hypothesis, is also a proper subvariety of the regular set of $V$, by Lemma~\ref{exceptional_set_variety} combined with Lemma~\ref{exceptional_set_empty_int}.  One would like to then use induction to reduce approximation (i.e., the support of a putative annihilating measure $\mu$) on $V$ to approximation on a sequence of varieties of decreasing dimension, until the dimension is zero (i.e., the variety is a discrete set), and then to conclude that the support of an annihilating measure must be empty.  However, it is not obvious that the \emph{union} of the exceptional set and the singular set, even locally, must itself be contained in a subvariety of 
$V$ of dimension less than that of $V$ (although we have no counterexample).  Could, for example, the exceptional set accumulate at every point of the singular set?

To get around this difficulty, we treat the exceptional set and singular set separately, introducing a filtration of $V$ into exceptional sets and singular sets of decreasing dimensions, then first showing by induction on decreasing dimension of the exceptional sets that the support of any annihilating measure $\mu$ must lie 
in the singular set.  We then use induction again on a decreasing sequence of singular sets to reduce the support of $\mu$ to the empty set.

\section{Proof of Theorems 1.1 and 1.2} \label{proof}

We first indicate how Theorem \ref{main_theorem} can be obtained from
Theorem~\ref{support_theorem}.  Let the variety $V$, the compact set $K \subset V$, and the algebra $A$ be as in Theorem~\ref{main_theorem}.  If $\mu \in A^{\perp}$, Theorem~\ref{support_theorem} implies that $\mbox{supp}(\mu) \subset \partial K$.  Apply Theorem~\ref{support_theorem} with $V$ replaced by $\partial K$ and $K$ replaced by $\partial K$ also.  Lemma~\ref{inheritance_lemma} implies that $\overline{A|\partial K}$ satisfies (i) and (ii). Note that ${\rm int}(\partial K)$ relative to $\partial K$ is $\partial K$. Therefore Theorem~\ref{support_theorem} implies that $\mbox{supp}(\mu) \cap \partial K = \emptyset$, and hence $\mbox{supp}(\mu) = \emptyset$, so $\mu \equiv 0$.  This establishes Theorem~\ref{main_theorem}.

We now turn to the proof of Theorem \ref{support_theorem}, beginning with a general construction.

Let $\Sigma$ be a real-analytic variety in the open set $W \subset \mathbb{R}^n$, and let $\Phi$ be a collection of functions real-analytic on $\Sigma$.  We define inductively subsets $\Sigma_{k}$ of $\Sigma$ such that $\Sigma_{0} = \Sigma$, and for $k \geq 1$, $\Sigma_{k}$ is a real-analytic subvariety of
\[ W_{k} := W \setminus \bigcup_{j=0}^{k-1} \; (\Sigma_{j})_{\mbox{\tiny sing}} \]
defined by
\[ \Sigma_{k} = (\Sigma_{k-1})_{\Phi}. \]
Note that by definition, $\Sigma_{k} \subset (\Sigma_{k-1})_{\mbox{\tiny reg}}$.
We will refer to the varieties $\Sigma_{k}$ as the {\em E-filtration of $\Sigma$ in $W$ with respect to $\Phi$}, and to the sets $(\Sigma_{k})_{\mbox{\tiny sing}}$ as the {\em S-filtration of $\Sigma$ in $W$} (\emph{E} for exceptional, \emph{S} for singular).

\begin{lemma} \label{dimension_lemma} With $V,\Omega,K, A, \Phi$ as in Theorem \ref{support_theorem}, suppose that $W$ is an open subset of $\Omega$  and $\Sigma \subset {\rm int}(K) \cap W$ is a real-analytic subvariety of $W$.  Let $\{ \Sigma_{k} \}$ be the E-filtration of $\Sigma$ in $W$ with respect to $\Phi$.  Then for each $k$, the dimension of $\Sigma_{k}$ is no more than $\mbox{dim} (\Sigma) - k$. \end{lemma}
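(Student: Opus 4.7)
My plan is to argue by induction on $k$. The base case $k=0$ is immediate from $\Sigma_0 = \Sigma$, so I focus on the inductive step. Assuming $\dim(\Sigma_{k-1}) \leq \dim(\Sigma) - (k-1)$, the goal is $\dim(\Sigma_k) \leq \dim(\Sigma_{k-1}) - 1$. By the definition of the $E$-filtration, $\Sigma_k = (\Sigma_{k-1})_{\Phi} \subset (\Sigma_{k-1})_{\mbox{\tiny reg}} \subset \Sigma_{k-1}$, and by Lemma~\ref{exceptional_set_variety} applied to the variety $\Sigma_{k-1}$ in the open set $W_{k-1}$, the set $\Sigma_k$ is a real-analytic subvariety of the open set $W_k \subset W_{k-1}$. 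The dimension-drop fact recorded in Section~\ref{prelims} --- that a subvariety with empty interior relative to an ambient variety has strictly smaller dimension --- then reduces the problem to the single key claim that $\Sigma_k$ has empty interior in $\Sigma_{k-1}$.

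To prove the claim I argue by contradiction. Suppose $U$ is a non-empty open subset of $\Sigma_{k-1}$ with $U \subset \Sigma_k$. Since $U \subset \Sigma_k \subset M := (\Sigma_{k-1})_{\mbox{\tiny reg}}$ and $M$ is open in $\Sigma_{k-1}$, $U$ is in fact open in $M$. I pick $p \in U$ and a closed ball $K' \subset M$ centered at $p$ with $K' \subset U$. The hypothesis $\Sigma \subset {\rm int}(K) \cap W$ forces $K' \subset \Sigma \subset {\rm int}(K)$, so $K'$ is a compact subset of $K$. By Lemma~\ref{inheritance_lemma}, $\overline{A|K'}$ is then a uniform algebra on $K'$ satisfying (i) and (ii), generated by the (real-analytic, hence $C^1$-smooth) restrictions of the functions in $\Phi$ to $K'$. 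Applying Lemma~\ref{exceptional_set_empty_int} to $M$, $K'$, and $\overline{A|K'}$ yields that $M_{\Phi} \cap K'$ has empty interior in $M$. But $K' \subset U \subset \Sigma_k$, and $\Sigma_k \subset M_{\Phi}$ by the very definition of the exceptional set, so $M_{\Phi} \cap K' = K'$, a closed ball with non-empty interior in $M$ --- the desired contradiction. The dimension-drop fact then completes the induction.

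The main technical hurdle is ensuring that Lemma~\ref{exceptional_set_empty_int} can be invoked at each level of the filtration: that lemma requires a uniform algebra satisfying (i) and (ii) on a compact subset of a $C^1$ manifold, but as $k$ grows the relevant manifold changes from $V$ to $(\Sigma_{k-1})_{\mbox{\tiny reg}}$ and the relevant algebra changes from $A$ to $\overline{A|K'}$. This is precisely why the statement of Lemma~\ref{dimension_lemma} demands $\Sigma \subset {\rm int}(K) \cap W$ --- the hypothesis makes available a small closed ball $K'$ around any point $p \in (\Sigma_{k-1})_{\mbox{\tiny reg}}$ that sits inside ${\rm int}(K)$, after which Lemma~\ref{inheritance_lemma} transfers the peak-point and maximal-ideal-space conditions from $A$ on $K$ down to $\overline{A|K'}$ on $K'$. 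Once this localization is set up, the rest is formal manipulation of subspace topologies and the standard dimension-drop principle for real-analytic subvarieties.
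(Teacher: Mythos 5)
Your proof is correct and takes essentially the same route as the paper: both reduce the inductive step to showing that $\Sigma_k$ has empty interior in $\Sigma_{k-1}$ by localizing to a small compact set around a regular point, invoking Lemma~\ref{inheritance_lemma} to push conditions (i) and (ii) down to the restricted algebra, and then applying Lemma~\ref{exceptional_set_empty_int}. The only cosmetic differences are that you argue the empty-interior step by contradiction while the paper argues it directly, you use a small closed ball $K'$ where the paper uses the closure of a smoothly bounded neighborhood $U$, and the paper explicitly passes to a constant-dimension component of $(\Sigma_{k-1})_{\mbox{\tiny reg}}$ before invoking Lemma~\ref{exceptional_set_empty_int} (which requires an $m$-dimensional manifold); you should add the observation that, shrinking $K'$ if necessary, it lies in such a component, but this is immediate since $p$ is a regular point.
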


\begin{proof} Let $d = \mbox{dim}(\Sigma)$. The proof is by induction on $k$. The result is clear when $k = 0$.  Suppose we have shown for some $k$ that $\mbox{dim}(\Sigma_{k})\leq d - k$.  Fix $p \in (\Sigma_{k})_{\mbox{\tiny reg}}$, and let $U$ be a smoothly bounded
neighborhood of $p$ in $(\Sigma_{k})_{\mbox{\tiny reg}}$ with $\overline{U} \subset (\Sigma_{k})_{\mbox{\tiny reg}}$.  We may assume that
$U$ has constant dimension (by induction, no more than $d - k$) as a submanifold of
$\mathbb{R}^n$.  Lemma~\ref{inheritance_lemma} implies that (i) and (ii) hold with $A$ replaced by $\overline{A|{\overline{U}}}$.  We may therefore apply
Lemma~\ref{exceptional_set_empty_int} taking $M =\overline{U}$ and
replacing $A$ with $\overline{A|{\overline{U}}}$. The conclusion implies that $\Sigma_{k+1} = (\Sigma_{k})_{\Phi}$ has no interior in $U$.  Since $p$ was arbitrary, we conclude that
\[ \mbox{dim}(\Sigma_{k+1}) \leq \mbox{dim}(\Sigma_{k}) - 1 \leq d - (k+1) \].
By induction, the proof is complete. \end{proof}

Note that Lemma \ref{dimension_lemma} implies, with $d = \mbox{dim}(\Sigma)$,  that $\Sigma_{d}$ is a zero-dimensional variety, i.e., is a discrete set and hence, in particular, is at most countable.

Let $B(p,r)$ denote the open ball of radius $r$ centered at $p \in \mathbb{C}^n$.

\begin{lemma} \label{support_in_S_filtration} With $V,\Omega,K, A, \Phi$ as in Theorem \ref{support_theorem}, assume $p \in {\rm int}(K)$ and $\mu\in A^\perp$.  If $r > 0$ is such that $B(p,r) \cap V \subset K$, and there is a real-analytic
$d$-dimensional subvariety $\Sigma \subset V$ of $B(p,r)$ with\/ $\mbox{\rm supp }(\mu) \cap B(p,r) \subset \Sigma$, then\/ $\mbox{\rm supp }(\mu) \cap B(p,r)$ is contained in the S--filtration of $\Sigma$, i.e., $\mbox{\rm supp }(\mu) \cap B(p,r)\subset
\bigcup\limits_{k=0}^{d-1} (\Sigma_{k})_{\mbox{\rm\tiny sing}}$.
\end{lemma}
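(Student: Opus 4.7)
The plan is to prove, by induction on $k \in \{0,1,\dots,d\}$, the sharper inclusion
\[
\mbox{supp}(\mu) \cap B(p,r) \;\subset\; \Sigma_k \;\cup\; \bigcup_{j=0}^{k-1}(\Sigma_j)_{\mbox{\tiny sing}}
\]
(the union empty when $k=0$), and then to eliminate the residual $\Sigma_d$ contribution. The base case is the hypothesis of the lemma. For the endgame, Lemma~\ref{dimension_lemma} gives $\dim\Sigma_d \le 0$, so $\Sigma_d$ is discrete in $W_d$; an easy inductive check shows $\bigcup_{j=0}^{d-1}(\Sigma_j)_{\mbox{\tiny sing}}$ is closed in $B(p,r)$, so a point of $\mbox{supp}(\mu)\cap\Sigma_d$ not in the S-filtration would be isolated in $\mbox{supp}(\mu)$ and carry positive $\mu$-mass, contradicting the peak-point hypothesis.

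For the inductive step $k\to k+1$, fix $q\in\mbox{supp}(\mu)\cap B(p,r)$ with $q\notin\bigcup_{j=0}^{k}(\Sigma_j)_{\mbox{\tiny sing}}$; I must show $q\in\Sigma_{k+1}$. The inductive hypothesis together with $q\notin(\Sigma_k)_{\mbox{\tiny sing}}$ places $q\in(\Sigma_k)_{\mbox{\tiny reg}}$. Suppose for contradiction $q\notin\Sigma_{k+1}=(\Sigma_k)_{\Phi}$: then $f_1,\dots,f_m\in\Phi$ exist with $df_1\wedge\cdots\wedge df_m(q)\ne 0$, where $m\le d-k$ is the local dimension of $(\Sigma_k)_{\mbox{\tiny reg}}$ at $q$. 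Since $\bigcup_{j=0}^{k}(\Sigma_j)_{\mbox{\tiny sing}}$ and $\Sigma_{k+1}$ are both closed and miss $q$, I choose $\rho>0$ so small that $\bar B(q,\rho)\subset B(p,r)$ is disjoint from each of them and $df_1\wedge\cdots\wedge df_m\ne 0$ on all of $\Sigma_k\cap\bar B(q,\rho)$.

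The crucial construction is the compact set
\[
X \;:=\; \mbox{supp}(\mu)\;\cup\;\bigl(\Sigma_k\cap\bar B(q,\rho)\bigr)\;\subset\; K.
\]
Because $\bar B(q,\rho)$ misses $\bigcup_{j<k}(\Sigma_j)_{\mbox{\tiny sing}}$, the inductive hypothesis yields $X\cap B(q,\rho/2)=\Sigma_k\cap B(q,\rho/2)$, which is an open $m$-dimensional submanifold of $(\Sigma_k)_{\mbox{\tiny reg}}$. Set $\tilde A:=\overline{A|X}$ (which satisfies (i) and (ii) by Lemma~\ref{inheritance_lemma}) and $E:=X\setminus\bigl(\Sigma_k\cap B(q,\rho/2)\bigr)$. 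Theorem~\ref{approximation_theorem} now applies to $(X,\tilde A,E)$: condition (1) is witnessed by $f_1,\dots,f_m$, and condition (2) holds because $\Phi|X\subset\tilde A$ is $C^1$ and point-separating on $X$ (a generating subalgebra of a point-separating algebra is itself point-separating). Its conclusion says every $g\in C(X)$ vanishing on $E$ lies in $\tilde A$. Since $\mbox{supp}(\mu)\subset X$, the measure $\mu$ viewed on $X$ annihilates $\tilde A$, so $\mu$ vanishes on $X\setminus E=\Sigma_k\cap B(q,\rho/2)$; combined with the inductive hypothesis $\mbox{supp}(\mu)\cap B(q,\rho/2)\subset\Sigma_k\cap B(q,\rho/2)$, this forces $\mu$ to vanish on the whole open ball $B(q,\rho/2)$, contradicting $q\in\mbox{supp}(\mu)$.

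The main obstacle is the choice of $X$ in the inductive step. The naive candidate $X=\bar U$ for a small closed neighborhood $\bar U$ of $q$ in $(\Sigma_k)_{\mbox{\tiny reg}}$ fails because mass of $\mu$ outside $X$ obstructs $\mu|X$ from annihilating $\overline{A|X}$. Enlarging $X$ to contain all of $\mbox{supp}(\mu)$ restores the annihilation, while the slice $\Sigma_k\cap\bar B(q,\rho)$ supplies the local $m$-manifold structure near $q$ that Theorem~\ref{approximation_theorem} demands; the inductive hypothesis is precisely what guarantees that these two enlargements are compatible near $q$.
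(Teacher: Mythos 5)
Your proof is correct, and the inductive step takes a genuinely different route from the paper's. You establish the same inductive statement (the paper writes it with index $L=k-1$), but while the paper applies Theorem~\ref{approximation_theorem} \emph{globally} --- setting $X=(K\setminus B(p,r))\cup\bigcup_{j\le L}(\Sigma_j)_{\mbox{\tiny sing}}\cup\Sigma_{L+1}$ and $E$ one layer further down, and concluding $\mbox{supp}(\mu)\subset E$ in one stroke --- you argue by contradiction at a single putative support point $q$, taking the deliberately hybrid compact set $X=\mbox{supp}(\mu)\cup(\Sigma_k\cap\bar B(q,\rho))$ and a correspondingly local $E$, then using Theorem~\ref{approximation_theorem} to force $|\mu|$ to vanish near $q$. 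Both routes rest on Lemma~\ref{inheritance_lemma}, Theorem~\ref{approximation_theorem}, Lemma~\ref{dimension_lemma}, and the peak-point/no-point-mass fact; your local version has the small technical benefit that, after shrinking $\rho$, $X\setminus E=\Sigma_k\cap B(q,\rho/2)$ is automatically a manifold of a single dimension $m$, whereas the paper's global $X\setminus E$ may a priori have components of $(\Sigma_k)_{\mbox{\tiny reg}}$ of different dimensions, which one must handle piecewise when invoking Theorem~\ref{approximation_theorem}. One small slip: your parenthetical ``a generating subalgebra of a point-separating algebra is itself point-separating'' should read ``generating set'' --- $\Phi$ need not be an algebra --- but the fact you want (if $\Phi$ fails to separate two points, so does the closed algebra it generates) is true and suffices for condition (2). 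Your observation that $\bigcup_{j=0}^{d-1}(\Sigma_j)_{\mbox{\tiny sing}}$ is closed in $B(p,r)$ is also needed (implicitly) in the paper's closing sentence, and you are right that an easy induction on $j$ proves it.
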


\begin{proof}
We will show by induction on $L$ that
\[ \mbox{supp}(\mu) \cap B(p,r) \subset \bigcup_{k=0}^{L} (\Sigma_{k})_{\mbox{\tiny sing}} \cup \Sigma_{L+1} \]
for each $L$, $0 \leq L \leq d-1$.
This suffices as the hypothesis that each point of $K$ is a peak point for $A$ implies that $\mu\in A^\perp$ has no point masses, and hence $\mu(\Sigma_d)=0$ since $\Sigma_d$ is at most countable.

For the $L = 0$ case, let $X = (K \setminus B(p,r)) \cup \Sigma_{0} $ and let $E =(K \setminus B(p,r)) \cup (\Sigma_{0})_{\mbox{\tiny sing}} \cup \Sigma_{1}$.  Note that both $X$ and $E$ are closed.  We want to show that $\mbox{supp}(\mu)\subset E$.   By Lemma \ref{inheritance_lemma}, the maximal ideal space of
$\overline{A|X}$ is $X$.  Note that $X \setminus E = \Sigma_{0} \setminus \bigl((\Sigma_{0})_{\mbox{\tiny sing}} \cup \Sigma_{1}\bigr)$ satisfies the hypotheses of Theorem~2.4.  Therefore by Theorem \ref{approximation_theorem}, if $g \in C(K)$ vanishes on $E$, then $g|X$ belongs to $\overline{A|X}$.  Since by hypothesis $\mbox{supp}(\mu) \subset X$, we get that $\int_{K} g \; d\mu = 0$ for each $g \in C(K)$ vanishing on $E$, and this implies that $\mbox{supp}(\mu) \subset E$, as desired.

The general induction step is similar: assuming the result for some $0\leq L < d-1$, we set
\[ X = (K \setminus B(p,r)) \cup \bigcup_{k=0}^{L} (\Sigma_{k})_{\mbox{\tiny sing}} \cup \Sigma_{L+1}, \]
\[ E = (K \setminus B(p,r)) \cup \bigcup_{k=0}^{L+1} (\Sigma_{k})_{\mbox{\tiny sing}} \cup \Sigma_{L+2}, \]
Both $X$ and $E$ are closed.  Noting that the induction hypothesis implies that $\mbox{supp}(\mu) \subset X$, we apply Theorem \ref{approximation_theorem} to $\overline{A|X}$ as above, to conclude that $\mbox{supp}(\mu) \subset E$.  \end{proof}

\begin{lemma} \label{support_in_boundary} With $V,\Omega,K, A, \Phi$ as in Theorem \ref{support_theorem}, assume $p \in {\rm int}(K)$ and $\mu\in A^\perp$.  Assume also that there exist $r > 0$ with $B(p,r) \cap V \subset {\rm int}(K)$ and a real-analytic subvariety $\Sigma \subset V$ of $B(p,r)$ with\/ $\mbox{\rm supp}(\mu) \cap B(p,r)$ contained in the S--filtration of $\Sigma$ in $B(p,r)$.  Then there exists $r^{\prime} > 0$ such that\/ $\mbox{\rm supp}(\mu) \cap B(p,r^{\prime}) = \emptyset$. \end{lemma}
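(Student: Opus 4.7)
The plan is to prove Lemma 4.4 by induction on $d = \mbox{dim}(\Sigma)$. The base case $d = 0$ is immediate, since the S-filtration is an empty union and the hypothesis gives $\mbox{supp}(\mu) \cap B(p,r) = \emptyset$ directly. For the inductive step I assume the lemma whenever the subvariety has dimension less than $d$. If $p \notin \mbox{supp}(\mu)$ the conclusion is trivial, so assume $p$ is a support point; by the hypothesis $p \in S_{k_0} := (\Sigma_{k_0})_{\mbox{\tiny sing}}$ for some smallest index $k_0$, $0 \leq k_0 \leq d-1$.

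If $k_0 \geq 1$, the set $W_{k_0}$ is open in $W$, contains $p$, and is disjoint from every $S_j$ with $j < k_0$ (since $W_{k_0} \subset W_{j+1} = W_j \setminus S_j$). I choose $\rho_1 > 0$ with $B(p,\rho_1) \subset W_{k_0}$, so that $\Sigma_{k_0} \cap B(p,\rho_1)$ is a real-analytic subvariety of $B(p,\rho_1)$ of dimension at most $d - k_0 < d$. A direct unwinding of the definitions shows that its S-filtration in $B(p,\rho_1)$ equals $\bigcup_{j=k_0}^{d-1} S_j \cap B(p,\rho_1)$, which contains $\mbox{supp}(\mu) \cap B(p,\rho_1)$ by the hypothesis on $\mu$. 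The inductive hypothesis applied to $\Sigma_{k_0} \cap B(p,\rho_1)$ then produces the desired $r'$.

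The case $k_0 = 0$ is the main obstacle, because $\Sigma_1 = \Sigma_{\Phi}$ is merely a subvariety of $W \setminus S_0$ and its closure near a point of $S_0$ need not be a subvariety of $W$; there is no obvious single subvariety of $B(p,\rho_1)$ of dimension less than $d$ carrying $\mbox{supp}(\mu) \cap B(p,\rho_1)$. I handle it as follows. Choose $\rho_1 > 0$ small enough that $B(p,\rho_1) \cap V \subset \mbox{int}(K)$ and $\Sigma_{\mbox{\tiny sing}} \cap B(p,\rho_1) \subset Y$ for some real-analytic subvariety $Y$ of $B(p,\rho_1)$ with $\mbox{dim}(Y) < d$ (available from the local-containment property for $\Sigma_{\mbox{\tiny sing}}$ recalled in Section 2). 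The first step is to show that $\mbox{supp}(\mu) \cap (B(p,\rho_1) \setminus Y) = \emptyset$: any point $q$ in this intersection lies outside $S_0$ and hence, being in some $S_k$ with $k \geq 1$, lies in $\Sigma_1$; choosing $\rho_q > 0$ with $B(q,\rho_q) \subset B(p,\rho_1) \setminus Y \subset W_1$, the set $\Sigma_1 \cap B(q,\rho_q)$ is an honest subvariety of $B(q,\rho_q)$ of dimension less than $d$ which by the same exclusion contains $\mbox{supp}(\mu) \cap B(q,\rho_q)$, so Lemma 4.3 followed by the inductive hypothesis centered at $q$ yields a ball around $q$ disjoint from $\mbox{supp}(\mu)$, contradicting $q \in \mbox{supp}(\mu)$. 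Therefore $\mbox{supp}(\mu) \cap B(p,\rho_1) \subset Y$, and applying Lemma 4.3 with $\Sigma$ replaced by $Y$ and then the inductive hypothesis to $Y$ produces the required $r'$, completing the proof.
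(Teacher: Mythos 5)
Your proof is correct, and it takes a genuinely different organizing route from the paper's while using the same key ingredients (Lemma \ref{support_in_S_filtration}, the local containment of $\Sigma_{\mbox{\tiny sing}}$ in a lower-dimensional variety, and the outer induction on $\dim \Sigma$).

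The paper runs a double induction: the outer one on $\dim\Sigma$, and an inner one on $L$ that peels the support down from $\bigcup_{k=0}^{d-1}(\Sigma_k)_{\mbox{\tiny sing}}$ to $\bigcup_{k=0}^{d-1-L}(\Sigma_k)_{\mbox{\tiny sing}}$, one layer at a time. Each inner step fixes a point $q$ in the outermost remaining layer $(\Sigma_{d-1-L})_{\mbox{\tiny sing}}$, shrinks to a ball in $W_{d-1-L}$ so that only that layer survives, covers it by a subvariety $Y$ of smaller dimension, and invokes Lemma \ref{support_in_S_filtration} plus the outer induction. A final pass (the case $L = d-1$) then handles $(\Sigma_0)_{\mbox{\tiny sing}}$.

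You instead replace the inner induction on $L$ with a case split on the minimal index $k_0$ for which the center $p$ lies in $(\Sigma_{k_0})_{\mbox{\tiny sing}}$. When $k_0\ge 1$, you shrink $B(p,\cdot)$ into $W_{k_0}$ and apply the outer induction hypothesis directly to the lower-dimensional variety $\Sigma_{k_0}\cap B(p,\rho_1)$, after observing (correctly — the E-filtration is constructed locally, and the surplus terms $(\tilde\Sigma_j)_{\mbox{\tiny sing}}$ for $j\ge\dim(\Sigma_{k_0}\cap B(p,\rho_1))$ vanish by Lemma \ref{dimension_lemma}) that the restricted S-filtration matches. When $k_0=0$ you show $\operatorname{supp}\mu\cap B(p,\rho_1)\subset Y$ by a contradiction argument: any putative support point $q$ off $Y$ lies in $\Sigma_1$, and the ball $B(q,\rho_q)\subset B(p,\rho_1)\setminus Y\subset W_1$ lets you apply Lemma \ref{support_in_S_filtration} and the dimension induction at $q$ to expel $q$ from the support. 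This does the work of the paper's entire inner induction in one step. Both proofs then close identically: once the support near $p$ is known to lie in a subvariety $Y$ of smaller dimension, Lemma \ref{support_in_S_filtration} plus the outer induction finishes. (One remark worth keeping in mind either way: the $Y$ furnished by the local-containment property must be intersected with $V$ to meet the hypothesis $\Sigma\subset V$ of Lemma \ref{support_in_S_filtration}; this is harmless since one can replace $Y$ by $Y\cap\Sigma$.) Your single-induction-plus-case-split is somewhat leaner than the nested induction, at the cost of having to verify the identification of the restricted S-filtration; the paper's nested induction avoids that verification by always passing to a fresh ball before constructing any filtration.
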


\begin{proof} We apply induction on the dimension of $\Sigma$.  If $\mbox{dim}(\Sigma) = 0$, then $\Sigma$ is discrete.  Since each point of $K$ is a peak point for $A$, the measure $\mu \in A^{\perp}$ has no point masses, and so $|\mu|(B(p,r)) = |\mu|(\Sigma) = 0$.  Now suppose the conclusion of the Lemma holds whenever $\mbox{dim}(\Sigma) < d$.  If $\mbox{dim}(\Sigma) = d$, let $\Sigma_{0}, \ldots, \Sigma_{d}$ be the \emph{E}-filtration of $\Sigma$ in $B(p,r)$.  (Recall that $\Sigma_{d}$ is discrete.) By induction on $L$ we will show that
\begin{equation} \mbox{supp}(\mu) \cap B(p,r) \subset \bigcup_{k=0}^{d-1-L} (\Sigma_{k})_{\mbox{\tiny sing}}. \label{eq:supp} \end{equation}
for $L = 0, \ldots , d-1$.  The case $L = 0$ is the hypothesis of the Lemma.  Assume we have established (\ref{eq:supp}) for some $L$, $0 \leq L < d-1$.
To show that (\ref{eq:supp}) holds with $L$ replaced by $L + 1$, we must show that $\mbox{supp}(\mu) \cap (\Sigma_{d-1-L})_{\mbox{\tiny sing}} = \emptyset$.  Fix $q \in (\Sigma_{d-1-L})_{\mbox{\tiny sing}}$.  By construction of the $\Sigma_{k}$, there exists $s > 0$ so that $B(q,s) \subset B(p,r)$ and $B(q,s) \cap (\Sigma_{k})_{\mbox{\tiny sing}} = \emptyset$ for all $k < d-1-L$. Therefore the induction hypothesis implies that $\mbox{supp}(\mu) \cap B(q,s) \subset (\Sigma_{d-1-L})_{\mbox{\tiny sing}}$.  Replacing $s$ by a smaller positive number if necessary, we may assume that there is a real-analytic subvariety $Y$ of $B(q,s)$ with $(\Sigma_{d-1-L})_{\mbox{\tiny sing}} \subset Y \subset V$ and $\mbox{dim}(Y) < \mbox{dim}(\Sigma_{d-1-L}) \leq L + 1 < d$ (the next-to-last inequality following from Lemma \ref{dimension_lemma}).  By Lemma \ref{support_in_S_filtration}, $\mbox{supp}(\mu) \cap B(q,s)$ is contained in the \emph{S}-filtration of $Y$ in $B(q,s)$.  Now note that our induction hypothesis on dimension implies that the conclusion of Lemma \ref{support_in_boundary} holds with $\Sigma$ replaced by $Y$, since $\mbox{dim}(Y) < d$.  We conclude that there exists $s'>0$ such that $\mbox{supp}(\mu) \cap B(q,s') = \emptyset$.  Since $q  \in (\Sigma_{d-1-L})_{\mbox{\tiny sing}}$ was arbitrary, this shows that $\mbox{supp}(\mu) \cap (\Sigma_{d-1-L})_{\mbox{\tiny sing}} = \emptyset$ and completes the proof that (\ref{eq:supp}) holds for $L = 0, \ldots , d-1$.

Finally, the case $L = d-1$ of (\ref{eq:supp}) asserts that $\mbox{supp}(\mu) \cap B(p,r) \subset (\Sigma_{0})_{\mbox{\tiny sing}}$.   We may choose $t$ with $0 < t< r$ and a subvariety $Y \subset V$ of $B(p,t)$ so that $(\Sigma_{0})_{\mbox{\tiny sing}} \subset Y$ and $\mbox{dim}(Y) < \mbox{dim}(\Sigma) = d$.  By Lemma \ref{support_in_S_filtration}, $\mbox{supp}(\mu) \cap B(p,t)$ is contained in the \emph{S}-filtration of $Y$ in $B(p,t)$.  Again applying our induction hypothesis on dimension, we conclude that there exists $r'>0$ such that $\mbox{supp}(\mu) \cap B(p,r') = \emptyset$.  This completes the proof.  \end{proof}

We can now finish the proof of Theorem \ref{support_theorem}.  Let $\mu \in A^\perp$ be given.  Fix $p \in {\rm int}(K)$, and $r > 0$ such that $B(p,r) \cap V \subset \mbox{int}(K)$.  Taking $\Sigma = V \cap B(p,r)$ in Lemma \ref{support_in_S_filtration}, we see that $\mbox{supp}(\mu) \cap B(p,r)$ is contained in the \emph{S}--filtration of $V \cap B(p,r)$.  By Lemma \ref{support_in_boundary}, there is an $r'>0$ such that $\mu$ has no support in $B(p,r')$, concluding the proof.

We end with two remarks.  First, as we pointed out at the end of section \ref{prelims}, the role of the peak-point hypothesis serves to reduce the size of the exceptional set (see Lemma \ref{exceptional_set_empty_int}).  In Stout's theorem \cite{stout_varieties}, discussed in section \ref{intro}, absent a peak-point hypothesis, a similar role is played by a theorem of Diederich and Fornaess \cite{diederich-fornaess}, which states that a compact real-analytic variety in $\mathbb{C}^n$ contains no non-trivial germ of a complex-analytic variety.  Is there, for more general underlying spaces/algebras, a suitable generalization of the Diederich-Fornaess theorem?

Finally, the most general setting for a peak-point theorem such as Theorem \ref{main_theorem} would appear to be in the category of real-analytic spaces. We believe our methods could be extended to that context.

\medskip
{\bf Acknowledgement\/}: This paper was completed while the second author was a visitor at Indiana University.  He would like to thank the Department of Mathematics for its hospitality.

\end{document}